\documentclass[11pt]{amsart}
\pdfoutput=1
\usepackage{amssymb}
\usepackage{amsmath}
\usepackage{amsfonts}
\usepackage[usenames]{color}
\usepackage{graphicx}
\usepackage{array}
\usepackage{psfrag}
\usepackage{color}
\usepackage{ulem}

\makeatletter
 
 \@addtoreset{equation}{section}
\makeatother

\textwidth=5.8in
\hoffset=-0.9cm
\textheight=8.3in

\newtheorem{theorem}{Theorem}

\theoremstyle{definition}

\theoremstyle{remark}

\numberwithin{equation}{section}

\newcommand{\R}{\mathbb{R}}

\newcommand{\dfn}[1]{\textit{#1}}


\begin{document}

\title{The extremal function for apex graphs}
\author{Elena Pavelescu}
\address{Department of Mathematics and Statistics, University of South Alabama, Mobile, AL, 36688,  \textit{elenapavelescu@southalabama.edu}}
\date{\today}
\subjclass[2010]{05C35, 05C10}
\keywords{extremal function, apex graph}
\maketitle

\begin{abstract}
McCarty and Thomas conjectured that a linklessly embeddable graph with $n\ge 7 $ vertices and $t$ triangles has at most $3n-9 +\frac{t}{3}$ edges.
Thomas and Yoo proved this to be true for apex graphs.
We give a  shorter  and simpler proof for the apex case.\end{abstract}
\vspace{0.2in}

\section{Introduction}

All graphs in this paper are finite and simple.
A graph is \dfn{linklessly embeddable} if it can be embedded in $\R^3$ (or, equivalently, $S^3$) in such a way that no two cycles of the graph are linked. 
In \cite{MT}, McCarty and Thomas proved that a bipartite linklessly embeddable graph on $n\ge 5$ vertices has at most
$3n-10$ edges, unless it is isomorphic to the complete bipartite graph $K_{3,n-3}$.
They conjectured that a linklessly embeddable graph with $n\ge 7 $ vertices and $t$ triangles has at most $3n-9 +\frac{t}{3}$ edges.
Thomas and Yoo proved the conjecture holds for apex graphs \cite{TY}. 
A graph $G$ is apex if it can be made planar by deleting one vertex.
Apex graphs are know to be  linklessly embeddable \cite{Sa}.
Here we give a simpler and shorter proof for the apex case.
We use Euler's identity for connected planar graphs: $v-e+f=2$, where $v$, $e$, and $f$ represent the number of vertices, edges, and faces of a plane embedding.

\section{Main Result}

\begin{theorem}
Every apex graph with $n\ge 7$ vertices and $t$ triangles has at most $3n-9 +\frac{t}{3}$ edges.
\end{theorem}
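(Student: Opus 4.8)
The plan is to fix an apex vertex $a$, set $H:=G-a$ (which is planar, with a chosen plane embedding), and use Euler's formula on $H$ to convert the edge bound into a lower bound on the number of triangles. First, some reductions. If $G$ is disconnected one argues componentwise: components with at least $7$ vertices are handled by the connected case, the (few) smaller ones by the crude bound that an apex graph on $m\le 6$ vertices has at most $3m-3$ edges, and the surplus $9(k-1)$ coming from the $-9$ in each summand absorbs the small pieces. If $G-a$ is disconnected we join its components by a spanning forest of edges, choosing each new edge to have at least one endpoint outside $N_G(a)$ (so no new triangle is created), after first re-embedding $H$ with all components side by side; this keeps the graph apex and only increases $e(G)$ (the case $\deg_G(a)=n-1$, where this is impossible, is subsumed in the analysis below). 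So assume $G$ and $H$ are connected and $d:=\deg_G(a)\ge 1$. Let $f$ be the number of faces of $H$ and set $S:=\sum_{\phi}(|\phi|-3)$, summed over faces $\phi$ of $H$; since $H$ is simple and connected on $n-1\ge 3$ vertices, every face has length at least $3$, so $S\ge 0$. From $\sum_\phi|\phi|=2e(H)$ and $(n-1)-e(H)+f=2$ we obtain $e(H)=3n-9-S$, hence
\[
 e(G)=e(H)+d=3n-9+d-S .
\]
Thus it suffices to prove $t\ge 3(d-S)$; this is trivial when $d\le S$, so assume $d-S\ge 1$.

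Now decompose the triangles of $G$: each one either lies in $H$ or uses $a$, and the triangles through $a$ correspond bijectively to edges of $H$ with both ends in $A:=N_G(a)$. Writing $t_H$ for the number of triangles of $H$ and $e_A$ for the number of such edges, we have $t=t_H+e_A$. In the embedding, every face of length $3$ is a triangle of $H$, and distinct faces of length $3$ give distinct triangles (a $3$-cycle bounds two faces only when $H$ is a single triangle, impossible for $n\ge 5$); since at most $S$ faces have length $\ge 4$, the number of triangular faces is at least $f-S=(e(H)-n+3)-S=2n-6-2S$, so
\[
 t_H\ \ge\ \max\{0,\ 2n-6-2S\}.
\]

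It remains to bound $e_A$ from below. Let $B:=V(H)\setminus A$, so $|B|=n-1-d$, and note $e_A=e(H)-(\text{number of edges of }H\text{ meeting }B)$. The edges of $H$ between $A$ and $B$ form a planar bipartite graph on $n-1$ vertices, hence number at most $2(n-1)-4$, while $H[B]$ is planar, hence has at most $\max\{0,3|B|-6\}$ edges. When $|B|\ne 2$ this gives $(\text{edges meeting }B)\le 5(n-3)-3d$, so $e_A\ge 3d-2n+6-S$; combining with the bound on $t_H$, and distinguishing the two cases according to the sign of $2n-6-2S$ (using $d-S\ge 1$), yields $t=t_H+e_A\ge 3(d-S)$. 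The cases $d=n-1$ and $d=n-2$, i.e. $|B|\in\{0,1\}$, are treated by the same estimates, and it is exactly here that a couple of the inequalities require $n\ge 7$; at $n=6$ the statement genuinely fails, the octahedron $K_{2,2,2}$ being the obstruction.

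The one case needing an extra idea — and the main obstacle — is $|B|=2$, where the estimate $(\text{edges meeting }B)\le 5(n-3)-3d$ can fail, but only by $1$. When it fails, $B=\{b_1,b_2\}$ must satisfy $b_1b_2\in E(H)$ with both $b_1$ and $b_2$ adjacent to every vertex of $A$; then $\{b_1,v,b_2\}$ is a triangle of $H$ for each of the $n-3$ vertices $v\in A$, and since the edge $b_1b_2$ lies on only two faces, at most two of these triangles are facial. The remaining $\ge n-5\ge 2$ non-facial triangles of $H$ are distinct from the triangular faces already counted and from the $e_A$ triangles through $a$, and they more than make up the deficit of $1$, so $t\ge 3(d-S)$ once more. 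This completes the argument; the only points demanding genuine care are this $|B|=2$ subcase and the bookkeeping for disconnected and small graphs.
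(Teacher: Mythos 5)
Your main argument is sound, and once unwound it rests on the same two planarity estimates as the published proof: writing $B$ for the set of non-neighbours of the apex vertex $a$ in $H=G-a$, you bound the edges of $H$ meeting $B$ by $\bigl(2(n-1)-4\bigr)+\max\{0,3|B|-6\}$, and you isolate exactly the same exceptional configuration ($|B|=2$ with both vertices of $B$ dominating $H$, repaired by the $\ge n-5$ non-facial triangles through the edge $b_1b_2$). What is genuinely different, and arguably cleaner, is the framing: instead of completing $G$ to a maximal apex graph and tracking how many triangles each deleted edge can destroy (which forces the paper to carry the count $L$ of non-facial triangles throughout), you read $e(G)=3n-9+d-S$ off Euler's formula and reduce everything to the single inequality $t\ge 3(d-S)$. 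I verified the sign analysis on $2n-6-2S$ and $3d-2n+6-S$ and the exceptional case; they check out. Two loose ends in the main body: for $|B|=1$ the quoted bipartite bound $2(n-1)-4$ is not strong enough, and you must instead use $\deg_H(b)\le n-2\le 2n-9=5(n-3)-3(n-2)$, which is indeed where $n\ge 7$ enters (you flag this, so I read it as intended); and the claim that disconnected $H$ with $d=n-1$ is ``subsumed'' deserves a sentence, since your identity $e(H)=3n-9-S$ uses connectivity of $H$ (for $c\ge 2$ components one gets $e(H)=3n-6-3c-S$ and the case closes, but it is a separate computation, and $S\ge 0$ itself needs care for disconnected plane graphs).

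The one step that fails as written is the reduction for disconnected $G$. With $k$ components of which $s$ are small (at most $6$ vertices), your accounting needs $6s\le 9(k-1)$, and this fails precisely when $k=s=2$: two components, each on at most $6$ vertices, totalling at least $7$. (The theorem still holds there --- e.g.\ two disjoint copies of $K_5$ give $e=20\le 30-9+\frac{20}{3}$ --- but the surplus of $9$ cannot absorb the $12$ you give away.) The cleanest repair makes the whole componentwise accounting unnecessary: if $G$ is disconnected, add a single edge between two components. This preserves apexness (if neither endpoint is $a$, it joins two components of the planar graph $G-a$; if one endpoint is $a$, deleting $a$ removes it), creates no triangle (its endpoints have no common neighbour), and increases $e(G)$, so you may assume $G$ connected from the outset --- the same trick you already use to make $G-a$ connected.
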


\begin{proof}
We first show the theorem is true for maximal apex graphs. 
To construct a maximal apex graph $G$, start with a maximal planar graph $H$ and add one vertex $v$ together with all the edges from $v$ to the vertices of $H$. 
We write $G=H*v$.
If $G$ has $n$ vertices, then $G$ has $4n-10$ edges: $n-1$ edges incident to $v$, and $3n-9$ edges which are edges of $H$.
The graph $G$ has $5n-15+L$ triangles: $3n-9$ triangles containing $v$ and  $2n-6$ triangles which are the faces of a planar embedding of $H$, by Euler's identity, and $L$ triangles of $H$ which are not faces of a planar embedding of $H$.
Since $4n-10 \le 3n-9 + \frac{5n-15+L}{3}$, for $n\ge 7$, $L\ge 0$, the theorem is true for maximal apex graphs.

Let  $G$ be an apex graph with $n$ vertices. 
Then $G$ is obtained from a maximal apex graphs $H* v$ through removal of edges.
Assume $G$ is obtained by removing $k$ edges incident to $v$ and $s$ edges of $H$. 
Let $a_1, a_2, \ldots a_k \in V(H)$ and consider the graph $G'$ obtained from $H*v$ by removing edges  $va_1, va_2, \ldots, va_k.$
Then \[t(H*v) - t(G') = |E(H)| -  |E(H-\{a_1, a_2, \ldots , a_k\} )|, \tag{1}\]
since the triangles of $H*v$ containing the edge $va_i$ correspond to the edges of $H$ incident to the vertex  $a_i$.
We claim that for $k\ge 1$ and an arbitrary choice of  vertices $a_1, a_2, \ldots a_k \in V(H)$, $$|E(H)| -  |E(H-\{a_1, a_2, \ldots , a_k\} )|\le 2n+3k-12, $$
unless $k=2$, and $H$ has  at least $2n-5$ edges incident to $a_1$, $a_2$, or both. This exceptional case is discussed at the very end of the proof.

\noindent To prove the claim, we consider $k=1$, $k=2$, and $k\ge 3$ separately.\\
For $k=1$, 
$$|E(H)| -  |E(H-\{a_1\} )| =\deg_H(a_1)\le n-2 \le 2n+3-12, \textrm{ for } n\ge 7.$$
For $k=2$ in non-exceptional cases, 
$$|E(H)| -  |E(H-\{a_1, a_2\} )| \le  2n-6 = 2n+6-12.$$

\noindent For $k\ge 3$, the graph $H$ has at most  $3k-6$ edges $a_ia_j$, with $1\le i<j\le k$ (by Euler's identity), and at most $2(n-1)-4 =2n-6$ edges  $a_ib_j$, with $1\le i\le k$ and $1\le j\le n-1-k$ (by Euler's identity for bipartite graphs).
Then the claim follows, since  $$|E(H-\{a_1, a_2, \ldots , a_k\} )| \ge |E(H)| - (3k-6) - (2n-6) =  |E(H)| - (2n+3k-12).$$ 
This  last inequality together with (1) gives \[t(H*v) - t(G') \le 2n+3k-12, \tag{2}\] meaning that when deleting $k$ edges incident to $v$ from $H*v$, at most $2n+3k-12$ triangles are deleted.

To obtain $G$, delete $s$ edges of $G'$ which are also edges of $H$. 
Each such edge belongs to at most one triangle containing the vertex $v$, at most two triangles which are faces of $H$, and possibly to some non-face triangles of $H$. 
These $s$ edges of $H$ belong to at most $L$ non-face triangles of $H$.
These observations yield \[t(G') \le t(G) +3s+L.\tag{3}\]
Adding the  inequalities (2) and (3) gives 
$$ 5n-15+L=t(H*v)\le t(G) +2n+3k+3s+L-12, $$
and further,
$$3n-3k-3s-3\le t(G).$$
\noindent We check  the inequality holds for the apex graph $G$:
\[ \tag{4}|E(G)| = 4n-10 - k - s = (3n-9) + (n -k-s-1) = \] $$=3n-9+\frac{3n-3k-3s-3}{3} \le 3n-9+\frac{t(G)}{3}. $$ 
\textit{The exceptional case.} Consider  $a_1, a_2\in V(H)$ such that  $H$ has at least $2n-5$ edges incident to $a_1$, $a_2$, or both.
This is only possible if $\deg_H(a_1)=\deg_H(a_2)=n-2$, that is, $a_1$ and $a_2$ are both adjacent to all other vertices of $H$ and $a_1a_2\in E(H)$. 
The graph $H$ is isomorphic to the join $K_2 + P_{n-3}$, where $P_{n-3}$ denotes the path with $n-3$ vertices.
Exactly $2n-5$ edges of $H$ are incident to $a_1$, $a_2$, or both.
For $G'=H*v - \{va_1, va_2\}$, \[\tag{5}t(H*v)-t(G') = 2n-5.\]
To obtain $G$, delete $s$ edges of $G'$ which are also edges of $H$. 
Edge $a_1a_2$ belongs to $L+2$ triangles of $G'$. 
Edges $a_1b_i$  (and $a_2b_i$), $i=1, 2, \ldots, n-3$,  belong to at most three triangles of $G'$: two triangles which are faces in $H$ and at most one non-face triangle of $H$.
Edges $b_ib_j$, $1\le i<j\le n-3$, belong to three triangles of $G'$: two triangles which are faces in $H$ and one triangle containing the vertex $v$.
These observations yield ($L\ge 1$ for $n\ge 7$)
\[t(G') \le t(G) +L+2 + 3(s-1) = t(G) + 3s + L -1.\tag{6}\]

Adding the identities (5) and (6) gives
\[  5n-15+L = t(H*v) \le t(G) + 2n +3s +L -6,\]
and further
\[ 3n -3s -9 \le t(G)\]
Since $k=2$, this inequality is equivalent to 
$$3n-3k-3s-3\le t(G),$$
and the same check as in (4) completes the proof.

\end{proof}

\noindent {\bf{Acknowledgements.}} The author would like to thank Youngho Yoo for noticing that non-face triangles were missing in the first version of the proof, and for suggesting a more detailed  explanation for the cases  $k=1$ and $k=2$.

\bibliographystyle{amsplain}

\end{document}